\theoremstyle{definition}
\newtheorem*{theoA}{Theorem A}
\newtheorem{theorem}{Theorem}[section]
\newtheorem{lem}{Lemma}[section]
\newtheorem{defi}{Definition}[section]
\newtheorem{rem}{Remark}[section]
\begin{document}
\title[An inequality regarding differential polynomial]{An inequality regarding differential polynomial}
\date{}
\author[S. Saha]{Sudip Saha$^{1}$}
\date{}
\address{$^{1}$Department of Mathematics, Ramakrishna Mission Vivekananda Centenary College, Rahara,
West Bengal 700 118, India.}
\email{sudipsaha814@gmail.com}
\maketitle
\let\thefootnote\relax
\footnotetext{2010 Mathematics Subject Classification: 30D45, 30D30, 30D20, 30D35.}
\footnotetext{Key words and phrases: Value distribution theory, Meromorphic functions, Differential monomials, Differential polynomial}
\begin{abstract}
In this paper, we prove an inequality regarding the differential polynomial. This improves some recent results.
\end{abstract}
\vspace{1cm}
\section{Introduction and Main Results}
Throughout this paper, we assume that the reader is familiar with the value distribution theory (\cite{Hy}). Further, it will be convenient to let that $E$ denote any set of positive real numbers of finite Lebesgue measure, not necessarily same at each occurrence. For any non-constant meromorphic function $f$, we denote by $S(r,f)$ any quantity satisfying $$S(r, f) = o(T(r, f))~~\text{as}~~r\to\infty,~r\not\in E.$$
~~~~Let $f$ be a non-constant meromorphic function. A meromorphic function $a(z)(\not\equiv 0,\infty)$ is called a \enquote{small function} with respect to $f$ if $T(r,a(z))=S(r,f)$. For example, polynomial functions are small functions with respect to any transcendental entire function.
\begin{defi} (\cite{f})
Let $a\in \mathbb{C}\cup\{\infty\}$.  For a positive integer  $k$, we denote
\begin{enumerate}
\item [i)] by $N_{k)}\left(r,a;f\right)$ the counting function of $a$-points of $f$ whose multiplicities are not greater than $k$,
\item [ii)] by $N_{(k}\left(r,a;f\right)$ the counting function of $a$-points of $f$ whose multiplicities are not less than $k$.
\end{enumerate}
Similarly, the reduced counting functions $\overline{N}_{k)}(r,a;f)$ and $\overline{N}_{(k}(r,a;f)$ are defined.
\end{defi}
\begin{defi}(\cite{ld})
For a positive integer $k$, we denote $N_{k}(r,0;f)$ the counting function of zeros of $f$, where a zero of $f$ with multiplicity $q$ is counted $q$ times if $q\leq k$, and is counted $k$ times if  $q> k$.
\end{defi}
In 2003, I. Lahiri and S. Dewan proved the following theorem:
\begin{theoA}(\cite{ld})
Let $f$ be a transcendental meromorphic function and $\alpha (\not \equiv 0, \infty)$ be a small function of $f$. If $\psi=\alpha(f)^{n}(f^{(k)})^p$, where $n(\geq 0), p(\geq 1), k(\geq 1)$ are integers, then for any small function $a(\not \equiv 0, \infty)$ of $\psi$ we have
$$ (p+n)T(r,f) \leq \overline{N}(r,\infty;f)+\overline{N}(r,0;f)+pN_{k}(r,0;f)+\overline{N}(r,a;\psi)+S(r,f).$$ 
\end{theoA}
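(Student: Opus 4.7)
The plan is to reduce the inequality to the second main theorem applied to $\psi$. Using the factorization $\psi=\alpha f^{n}(f^{(k)})^{p}$, I would first observe the identity
\[
\frac{1}{f^{p+n}} \;=\; \frac{\alpha}{\psi}\left(\frac{f^{(k)}}{f}\right)^{p},
\]
so that the logarithmic derivative lemma together with $m(r,\alpha)=S(r,f)$ gives $(p+n)\,m(r,1/f)\leq m(r,1/\psi)+S(r,f)$. Combined with the first main theorem applied to $1/f$, this yields
\[
(p+n)\,T(r,f) \;\leq\; m(r,1/\psi) + (p+n)\,N(r,0;f) + S(r,f).
\]

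Next I would convert the proximity term into counting functions. Writing $m(r,1/\psi)=T(r,\psi)-N(r,0;\psi)+O(1)$ and applying the second main theorem with the small target $a$ to the meromorphic function $\psi$, and using the easy bound $T(r,\psi)=O(T(r,f))$ so that $S(r,\psi)=S(r,f)$, I would obtain
\[
(p+n)T(r,f) \leq \overline{N}(r,\infty;\psi)+\overline{N}(r,0;\psi)-N(r,0;\psi)+\overline{N}(r,a;\psi)+(p+n)N(r,0;f)+S(r,f).
\]
The pole term is immediate: every pole of $\psi$ is a pole of $\alpha$ or of $f$, hence $\overline{N}(r,\infty;\psi)\leq \overline{N}(r,\infty;f)+S(r,f)$.

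The remaining and most delicate step is the pointwise analysis of zeros. At a zero $z_{0}$ of $f$ of multiplicity $m$, let $s$ denote the order of $f^{(k)}$ at $z_{0}$; so $s=m-k$ when $m\geq k$, and $s\geq 0$ when $m<k$. A direct computation shows that the contribution of $z_{0}$ to $\overline{N}(r,0;\psi)-N(r,0;\psi)+(p+n)N(r,0;f)$ is $1+p(m-s)$ when $\psi(z_{0})=0$, and $pm$ in the exceptional case $\psi(z_{0})\neq 0$ (which requires $n=0$ and $f^{(k)}(z_{0})\neq 0$, forcing $m\leq k$). In both situations this is at most $1+p\min(m,k)$, which is exactly the contribution of $z_{0}$ to $\overline{N}(r,0;f)+pN_{k}(r,0;f)$. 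A zero of $\psi$ that is not a zero of $f$ contributes $1$ minus the order of $\psi$ there, which is $\leq 0$ and so only helps. After absorbing zeros and poles of $\alpha$ into $S(r,f)$, summing over all zeros yields
\[
\overline{N}(r,0;\psi) - N(r,0;\psi) + (p+n)N(r,0;f) \;\leq\; \overline{N}(r,0;f) + pN_{k}(r,0;f) + S(r,f),
\]
which fed into the previous display gives the desired bound. I expect the main obstacle to be clean bookkeeping in the case $m<k$, especially the subcase $n=0$ where a zero of $f$ need not be a zero of $\psi$ at all; here one uses that $f^{(k)}$ is holomorphic at zeros of $f$ so $s\geq 0$, and everything else is a routine application of the first and second main theorems together with the logarithmic derivative lemma.
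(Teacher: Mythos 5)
Your proof is correct, and it follows essentially the same route as the paper's argument for the more general Theorem \ref{th1.1}: your identity $1/f^{p+n}=(\alpha/\psi)(f^{(k)}/f)^{p}$ plus the lemma on the logarithmic derivative is exactly Lemma \ref{lem4} specialized to a monomial (where $\overline{d}(P)=\underline{d}(P)=p+n$), and the remaining steps --- first main theorem, second main theorem for $\psi$ with targets $0,\infty,a$, and the pointwise comparison of $\mathrm{ord}_{z_0}\psi$ with $1+p\min(m,k)$ at zeros of $f$ --- mirror the paper's chain from (\ref{eq7}) through the local zero analysis. Your case bookkeeping (including the subcase $n=0$, $m\le k$ where $\psi(z_0)\neq 0$) is sound, so nothing further is needed.
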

In this paper we extend and improve the Theorem A. To state our next result, we recall a well known definition.
\begin{defi}(\cite{chak})
Let $n_{0j}, n_{1j}, \cdots, n_{kj}$ be non-negative integers. Then the expression $M_{j}[f]=(f)^{n_{0j}}(f')^{n_{1j}}\cdots(f^{(k)})^{n_{kj}}$ is called a differential monomial generated by $f$. The quantities $d(M_{j})= \sum \limits_{i=0}^{k} n_{ij}$ and $ \Gamma_{M_{j}}= \sum \limits_{i=0}^{k} (i+1)n_{ij}$ are known as the degree and weight of the monomial $M_{j}$ respectively. \par 
The sum $P[f]= \sum \limits_{j=1}^{t} b_{j}M_{j}[f]$ is called a differential polynomial generated by $f$, where $T(r,b_{j})=S(r,f)~(j=1,2, \cdots, t).$ The quantities $\overline{d}(P) = \max \limits_{1\leq j \leq t} \{d(M_{j})\} $ and $\Gamma_{P} = \max \limits_{1\leq j \leq t} \{\Gamma_{M_{j}}\} $ are called degree and weight of the polynomial $P[f]$ respectively. \par 
The numbers $\underline{d}(P) = \min \limits_{1\leq j \leq t} \{d(M_{j})\} $ and $k$ (the highest order of the derivative of $ f$ in $P[f]$) are known as the lower degree and order of the polynomial $P[f]$.\par
$P[f]$ is called homogeneous if $\underline{d}(P)=\overline{d}(P)$. $P[f]$ is called a linear differential polynomial if $\overline{d}(P)=1.$ Otherwise $P[f]$ is called a non-linear differential polynomial. 
\end{defi}
\vspace{0.1cm}
\begin{theorem}\label{th1.1}
Let $f(z)$ be a transcendental meromorphic function and $\alpha (z)(\not \equiv 0, \infty)$ be a small function of $f(z)$. Let $P[f] = \alpha \sum \limits_{j=1}^{t} M_j [f]$ be a differential polynomial generated by $f$, where $M_j[f] = c_j(f)^{n_{0j}}(f')^{n_{1j}} \cdots (f^{(k)})^{n_{kj}}~(j=1,2,\cdots,t)$ such that $k(\geq 1)$ is the order of $P[f]$,$~t(\geq 1),~n_{ij}(i=0,1,\cdots,k;j=1,2,\cdots ,t)$ are non-negative integers and $c_j(j=1,2,\cdots,t)$ are small functions of $f$ such that they do not have poles at the zeros of $f$. Then, for a small function $a(\not \equiv 0, \infty)$, 
\begin{eqnarray*}
\underline{d}(P) T(r,f) &\leq & \overline{N}(r,0;f)+\overline{N}(r,a;P[f])+\overline{N}(r,\infty;f) +n_{1e}N_{1}(r,0;f) \\
&~& + n_{2e}N_{2}(r,0;f)+\cdots +n_{ke}N_{k}(r,0;f) +S(r,f), 
\end{eqnarray*}
where $ (1\cdot n_{1e} + 2\cdot n_{2e}+ \cdots +k \cdot n_{ke})=\max \limits_{1\leq j \leq t}(1\cdot n_{1j} + 2\cdot n_{2j}+ \cdots +k \cdot n_{kj})$.
\end{theorem}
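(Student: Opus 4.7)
The plan is to mirror the proof of Theorem~A of Lahiri--Dewan, replacing their single monomial $\psi = \alpha f^n(f^{(k)})^p$ by the general differential polynomial $P[f]$. Three ingredients will be combined: the logarithmic derivative lemma, to pass from $\underline{d}(P)\,m(r,1/f)$ to $m(r,1/P[f])$; Nevanlinna's second fundamental theorem applied to $P[f]$ with target values $0$, $a$, $\infty$; and a pointwise combinatorial analysis at the zeros of $f$ which matches the orders of vanishing of the monomials $M_j[f]$ against the counting functions $N_i(r,0;f)$. The role of the index $e$ (the monomial that maximises $\sum_{i\geq1}i\,n_{ij}$) is precisely to make this pointwise bookkeeping close up to the stated right-hand side.

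First I would write each monomial as $M_j[f]=c_j\,f^{d(M_j)}\prod_{i\geq1}(f^{(i)}/f)^{n_{ij}}$, so that
\[ \frac{P[f]}{f^{\underline{d}(P)}}=\alpha\sum_{j=1}^{t}c_j\,f^{d(M_j)-\underline{d}(P)}\prod_{i\geq1}\left(\frac{f^{(i)}}{f}\right)^{n_{ij}}. \]
Since $d(M_j)-\underline{d}(P)\geq0$, the logarithmic derivative lemma applied to each $f^{(i)}/f$, together with the small-function hypotheses on $\alpha$ and on the $c_j$'s, yields $m(r,P[f]/f^{\underline{d}(P)})=S(r,f)$ (immediately when $P$ is homogeneous; in the non-homogeneous case the extra $f$-factors must be absorbed by a slightly finer estimate). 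This gives $\underline{d}(P)\,m(r,1/f)\leq m(r,1/P[f])+S(r,f)$. Next, applying Nevanlinna's second fundamental theorem to $P[f]$ with the three targets $0,a,\infty$, substituting $m(r,1/P[f])=T(r,P[f])-N(r,0;P[f])+O(1)$, and adding $\underline{d}(P)\,N(r,0;f)$ to both sides produces
\[ \underline{d}(P)\,T(r,f)\leq \overline{N}(r,0;P[f])-N(r,0;P[f])+\underline{d}(P)\,N(r,0;f)+\overline{N}(r,a;P[f])+\overline{N}(r,\infty;P[f])+S(r,f). \]

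The pole term is easy: by the hypothesis on $\alpha$ and the $c_j$'s, every pole of $P[f]$ arises from a pole of $f$, so $\overline{N}(r,\infty;P[f])\leq\overline{N}(r,\infty;f)+S(r,f)$. The main obstacle, and the heart of the proof, is the combinatorial inequality
\[ \overline{N}(r,0;P[f])-N(r,0;P[f])+\underline{d}(P)\,N(r,0;f)\leq\overline{N}(r,0;f)+\sum_{i=1}^{k}n_{ie}\,N_i(r,0;f)+S(r,f). \]
At a zero of $f$ of order $q$ (outside the $S(r,f)$-negligible zero-set of the $c_j$'s), each $M_j[f]$ vanishes to order exactly $q\,d(M_j)-\sum_{i\geq1}n_{ij}\min(q,i)$, so $P[f]$ vanishes to order at least the minimum of this quantity over $j$. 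When $q\geq k$ one has $\min(q,i)=i$, and the maximum over $j$ of $\sum_{i}n_{ij}\min(q,i)$ equals $\sum_{i}i\,n_{ie}$ by the defining property of $e$, so the per-zero bound closes up on the nose. The range $q<k$ requires a separate case analysis verifying that the $n_{ie}$-weighting still dominates, and one must also check that possible cancellations among the $M_j[f]$'s at a common zero of $f$ cannot weaken the bound; this is the genuinely delicate part. Zeros of $P[f]$ off the zero-set of $f$ contribute non-positively to $\overline{N}(r,0;P[f])-N(r,0;P[f])$ and so only help. Assembling these pieces gives the theorem.
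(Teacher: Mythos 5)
Your overall architecture coincides with the paper's: a Chuang-type proximity estimate, the second fundamental theorem applied to $P[f]$ at the targets $0,a,\infty$, and a local analysis at the zeros of $f$; the intermediate inequality you aim for, $\underline{d}(P)\,m(r,1/f)\le m(r,1/P[f])+S(r,f)$, is exactly what the paper extracts (in disguised form) from Lemma \ref{lem4}. But the way you propose to reach it does not work. The identity $P[f]/f^{\underline{d}(P)}=\alpha\sum_j c_j f^{d(M_j)-\underline{d}(P)}\prod_{i\ge1}(f^{(i)}/f)^{n_{ij}}$ leaves naked factors $f^{d(M_j)-\underline{d}(P)}$ whose proximity function can be of the order of $(\overline{d}(P)-\underline{d}(P))\,m(r,f)$, not $S(r,f)$: for $P[f]=f'+ff'$ and $f=e^z$ one has $P[f]/f=1+e^z$, so $m(r,P[f]/f)=T(r,f)+O(1)$. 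Hence $m(r,P[f]/f^{\underline{d}(P)})=S(r,f)$ is false for non-homogeneous $P$, and ``the extra $f$-factors must be absorbed by a slightly finer estimate'' is not a proof. The correct repair is to normalise by $f^{\overline{d}(P)}$ and invoke Chuang's inequality $m(r,P[f]/f^{\overline{d}(P)})\le(\overline{d}(P)-\underline{d}(P))m(r,1/f)+S(r,f)$ (Lemma \ref{lem4}); since $m(r,1/f^{\overline{d}(P)})=\overline{d}(P)m(r,1/f)$, subtracting the correction term recovers your inequality. This is precisely the route the paper takes, except that it carries the term $(\overline{d}(P)-\underline{d}(P))m(r,1/f)$ to the very end and cancels $(\overline{d}(P)-\underline{d}(P))T(r,f)$ there.

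The second, more serious, issue is the step you yourself call ``the genuinely delicate part'' and then do not carry out: for a zero of $f$ of order $q<k$ you must show $\max_j\sum_{i\ge1}n_{ij}\min(q,i)\le\sum_{i\ge1}n_{ie}\min(q,i)$, i.e.\ that the index $e$ maximising $\sum_i i\,n_{ij}$ also maximises every truncated weight $\sum_i\min(q,i)\,n_{ij}$. This is not automatic and is not a routine case check: for $M_1=(f')^3$ and $M_2=f^2f^{(4)}$ one has $e=2$, yet at a simple zero of $f$ (so $q=1$) one gets $\sum_i\min(1,i)n_{i1}=3>1=\sum_i\min(1,i)n_{i2}$, and the per-zero bound you need ($0+\underline{d}(P)\cdot 1\le 1+\sum_i n_{ie}\min(1,i)$, i.e.\ $3\le2$) fails for that configuration. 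The paper asserts the corresponding claim only in an unproved bracketed remark, so your proposal is incomplete at exactly the same spot; but deferring the verification does not close it, and as stated it appears to require either additional hypotheses on the monomials or a reformulation of the right-hand side. The remaining bookkeeping --- $\overline{N}(r,\infty;P[f])\le\overline{N}(r,\infty;f)+S(r,f)$, the fact that zeros of $P[f]$ away from zeros of $f$ only help, and the case $q\ge k$ --- is correct, though note that each $M_j[f]$ vanishes to order \emph{at least} (not exactly) $q\,d(M_j)-\sum_i n_{ij}\min(q,i)$, which is the direction you need.
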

\begin{rem}
Clearly, Theorem \ref{th1.1} extends and improves  Theorem A.
\end{rem}
\section{Necessary Lemmas}
\begin{lem}\label{lem1}(\cite{ham})
Let $A > 1$, then there exists a set $M(A)$ of upper logarithmic density at most
$ \delta(A) = \min \{(2e^{(A-1)}-1)^{-1}, 1+e(A-1)\exp(e(1-A))\}$ such that for $k = 1, 2, 3,\cdots$ 
$$\limsup \limits_{r \to \infty, r \notin M(A)} \frac{T(r,f)}{T(r,f^{(k)})} \leq 3eA.$$
\end{lem}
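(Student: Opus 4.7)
The plan is to establish the estimate by combining a direct Nevanlinna-type comparison between $T(r,f)$ and $T(r,f^{(k)})$ with a Borel-type covering argument that controls the exceptional set at the level of upper logarithmic density. The latter is the crucial point, since the same set $M(A)$ must serve for every $k$ simultaneously.

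For the comparison, I would start from the factorization $f = f^{(k)} \cdot (f/f^{(k)})$ and the first main theorem, which yield
$$T(r,f) \leq T(r, f^{(k)}) + m\bigl(r, f/f^{(k)}\bigr) + O(1),$$
using that $N(r,f) \leq N(r, f^{(k)})$ (every pole of order $q$ of $f$ becomes a pole of order $q+k$ of $f^{(k)}$, so multiplicities only grow). The proximity term $m(r, f/f^{(k)})$ is the reciprocal of the quantity controlled by the standard logarithmic derivative lemma, so the usual $S(r,f)$ bound does not apply. Instead one invokes Hayman's refinement, which gives $m(r, f/f^{(k)}) = O\bigl(\log^+ T(Ar, f) + \log r\bigr)$ outside a set of finite logarithmic measure.

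The Borel-type lemma then takes over, controlling both the scale inflation $r \mapsto Ar$ and the ratio $T(Ar, f)/T(r,f)$ itself: the set $\{r : T(Ar, f) > A\, T(r,f)\}$ has bounded upper logarithmic density, and this bound is quantified by $\delta(A)$. The two expressions inside the $\min$ correspond to two different routes: $\bigl(2e^{A-1}-1\bigr)^{-1}$ comes from an exponential iteration that is sharp as $A \to \infty$, while $1 + e(A-1)\exp(e(1-A))$ comes from a direct mean-value comparison that is sharp as $A \to 1^{+}$; each is computed separately and the smaller retained. Plugging the Borel estimate back into the comparison and optimizing parameters yields the factor $3eA$.

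The principal obstacle is the uniform-in-$k$ character of the inequality. Naively iterating a $k=1$ estimate of the form $T(r,f) \leq C\, T(r,f')$ would degrade the constant to $C^{k}$ for $T(r, f^{(k)})$ and thus be useless for the claimed uniform bound $3eA$. The remedy is to apply the direct comparison $T(r,f) \leq T(r, f^{(k)}) + m(r, f/f^{(k)})$ for each $k$ separately, observing that the Hayman refinement on $m(r, f/f^{(k)})$ produces an error essentially independent of $k$ up to $S(r,f)$ contributions, while the governing Borel set $M(A)$ depends only on the growth of $T(r,f)$ and not on $k$. Verifying that neither the constant $3eA$ nor the density $\delta(A)$ deteriorates in this process, with both branches of $\delta(A)$ tight, is the main piece of bookkeeping.
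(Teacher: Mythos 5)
There is nothing in the paper to compare against here: Lemma 2.1 is quoted verbatim from Hayman and Miles \cite{ham} and the paper supplies no proof. Judged on its own merits, your proposal contains a fatal gap at its central analytic step. The opening reduction $T(r,f) \le T(r,f^{(k)}) + m(r, f/f^{(k)}) + O(1)$, via $N(r,f)\le N(r,f^{(k)})$, is correct, but you then dispose of the proximity term by invoking a ``Hayman refinement'' of the form $m(r, f/f^{(k)}) = O(\log^{+} T(Ar,f) + \log r)$ outside a set of finite logarithmic measure. No such estimate exists. The lemma on the logarithmic derivative, in every refined form, bounds $m(r, f^{(k)}/f)$, and nothing comparable holds for the reciprocal. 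What the first main theorem applied to $f'/f$ actually gives is only $m(r, f/f') \le \overline{N}(r,\infty;f) + \overline{N}(r,0;f) + S(r,f)$, a quantity that can be comparable to $T(r,f)$ itself. Concretely, for $f(z) = \tan z$ one has $f/f' = \tfrac{1}{2}\sin 2z$, hence $m(r, f/f') = \tfrac{4r}{\pi}(1+o(1)) = (2+o(1))\,T(r,f)$, while $\log^{+} T(Ar,f) + \log r = O(\log r)$; your claimed bound fails for all large $r$, not merely on an exceptional set.

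This is not repairable bookkeeping; it is precisely the difficulty that the Hayman--Miles theorem exists to overcome. If your estimate were true, it would yield $\limsup_{r\to\infty,\, r\notin E} T(r,f)/T(r,f^{(k)}) \le 1$ with $E$ of finite logarithmic measure, hence of zero logarithmic density, which would render both the constant $3eA$ and the positive density bound $\delta(A)$ in the statement superfluous. The theorem is stated in its weaker form exactly because this stronger form is false: exceptional sets of positive logarithmic density are unavoidable, which is why $\delta(A)$ tends to $0$ only as $A\to\infty$. The published proof never treats $m(r, f/f^{(k)})$ as a small error term; its complex-analytic ingredient is a pointwise comparison of $T(r,f)$ with the characteristic of $f^{(k)}$ at an enlarged radius $\rho > r$, with a factor that blows up as $\rho$ decreases to $r$, and its real-variable ingredient is a Borel-type density argument that selects radii where $\rho$ can be taken close to $r$; the two branches of $\delta(A)$ and the factor $3eA$ come out of that real-variable analysis, not from the ``optimization'' you allude to but do not carry out. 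The one sound structural observation in your write-up is that the comparison must be run for each $k$ directly, rather than by iterating the case $k=1$, so that neither the constant nor the exceptional set deteriorates with $k$.
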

\begin{lem}\label{lem2} Let $f$ be a transcendental meromorphic function and  $\alpha~(\not \equiv 0, \infty)$ be a small function of $f$. Let, $\psi = \alpha(f)^{q_0}(f')^{q_1} \cdots (f^{(k)})^{q_k}$, where $q_0, q_1, \cdots, q_k(\geq 1), k(\geq 1)$ are non-negative integers. Then $\psi$ is not identically constant.
\end{lem}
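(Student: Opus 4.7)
\medskip

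\noindent\textbf{Proof plan.} I would argue by contradiction; suppose $\psi\equiv c$ for some constant $c$. If $c=0$, then $\alpha\,f^{q_0}(f')^{q_1}\cdots(f^{(k)})^{q_k}\equiv 0$, and since $\alpha\not\equiv 0$, some factor $(f^{(i)})^{q_i}$ with $q_i\geq 1$ must vanish identically (such an $i$ exists because $q_k\geq 1$). Then $f^{(i)}\equiv 0$ forces $f$ to be a polynomial of degree less than $i\leq k$, contradicting the transcendentality of $f$. Hence we may assume $c\neq 0$.

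\medskip

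For the main case $c\neq 0$, I would isolate the factor carrying the highest derivative:
\[
(f^{(k)})^{q_k}=\frac{c}{\alpha\,f^{q_0}(f')^{q_1}\cdots (f^{(k-1)})^{q_{k-1}}}.
\]
Applying the Nevanlinna characteristic to both sides and combining $T(r,\alpha)=S(r,f)$ with the standard estimate $T(r,f^{(i)})\leq (i+1)T(r,f)+S(r,f)$ yields
\[
q_k\,T(r,f^{(k)})\leq\Bigl(\sum_{i=0}^{k-1}(i+1)q_i\Bigr)T(r,f)+S(r,f).
\]
Lemma~\ref{lem1} then supplies the reverse comparison $T(r,f)\leq 3eA\,T(r,f^{(k)})$, valid for any $A>1$ along every sequence $r\to\infty$ outside the exceptional set $M(A)$. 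Substituting this into the previous inequality gives
\[
\Bigl(q_k-3eA\sum_{i=0}^{k-1}(i+1)q_i\Bigr)T(r,f)\leq S(r,f),
\]
which forces $T(r,f)=S(r,f)$ — contradicting transcendentality — whenever the bracket is positive; for instance whenever $\sum_{i<k}(i+1)q_i=0$, i.e.\ $\psi=\alpha(f^{(k)})^{q_k}$.

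\medskip

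The main obstacle is the case where $\sum_{i<k}(i+1)q_i$ is too large to be absorbed by the $3eA$-factor of Lemma~\ref{lem1}. To close this gap I would also exploit the logarithmic-derivative identity
\[
\frac{\alpha'}{\alpha}+\sum_{i=0}^{k}q_i\,\frac{f^{(i+1)}}{f^{(i)}}=0
\]
obtained from $\psi'/\psi\equiv 0$. A residue computation at a pole of $f$ of order $p$ outside the zero-pole set of $\alpha$ shows that the right-hand side contributes residue $-\sum q_i(p+i)\neq 0$, while $\alpha'/\alpha$ has residue $0$ there; hence every pole of $f$ lies among the singular points of $\alpha$, giving $\overline{N}(r,\infty;f)=S(r,f)$ and, through the Milloux-type inequality, the sharpened bound $T(r,f^{(i)})=T(r,f)+S(r,f)$. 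A parallel residue argument at zeros of $f$ (restricting the possible multiplicities in terms of the indices where $q_i>0$), combined with Nevanlinna's second fundamental theorem applied to $f^{(k)}$ — whose $0$- and $\infty$-points are now both $S(r,f)$ — should close the remaining gap and force $T(r,f)=S(r,f)$, the final contradiction.
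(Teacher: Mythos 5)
Your case $c=0$ and your analysis of the poles of $f$ are both sound (for the poles you do not even need a residue computation: at a pole of $f$ of order $p$ lying outside the zeros and poles of $\alpha$, the identity $f^{q_0}(f')^{q_1}\cdots(f^{(k)})^{q_k}=c/\alpha$ would force a pole of order $\sum_i q_i(p+i)\geq 1$ on the left and a finite nonzero value on the right), and the restriction you extract on the multiplicities of the zeros of $f$ is also correct. The genuine gap is the concluding step. Knowing that $\overline{N}(r,0;f^{(k)})=S(r,f)$ and $\overline{N}(r,\infty;f^{(k)})=S(r,f)$ does not let the second fundamental theorem ``close the remaining gap'': with only two values having small counting functions the theorem reads $T(r,f^{(k)})\leq \overline{N}(r,a;f^{(k)})+S(r,f)$, and the right-hand side is itself of the order of $T(r,f^{(k)})$, so the inequality is vacuous. (Keep in mind that $e^{z}$ omits $0$ and $\infty$ entirely and is still transcendental.) Nothing in your outline actually forces $T(r,f^{(k)})=S(r,f)$ or $T(r,f)=S(r,f)$ beyond the special case $\psi=\alpha(f^{(k)})^{q_k}$ that you already isolated.

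The missing ingredient is the estimate $m(r,1/f)=S(r,f)$. Since $\psi$ is a single homogeneous monomial of degree $d=\sum_i q_i$, one can write $c/(\alpha f^{d})=\prod_i\bigl(f^{(i)}/f\bigr)^{q_i}$ and apply the lemma on the logarithmic derivative (this is precisely Lemma \ref{lem4} in the situation $\overline{d}(P)=\underline{d}(P)$) to get $d\,m(r,1/f)\leq S(r,f)$, hence $T(r,f)=N(r,0;f)+S(r,f)$. Combined with your multiplicity restriction this immediately settles the case $q_0\geq 1$, where $f$ has no zeros outside the singular set of $\alpha$ and therefore $T(r,f)=S(r,f)$; but when $q_0=0$ the zeros of $f$ of multiplicity at most $\min\{i:q_i>0\}$ are not excluded, so $\overline{N}(r,0;f)$ must still be bounded by a further argument, which your sketch does not supply. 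For comparison, the paper does not prove this lemma internally at all: it merely notes $T(r,\alpha)=S(r,f)$ and delegates the whole statement to Lemma 3.4 of \cite{cspk}, so a self-contained proof such as yours would be a genuine addition --- but as it stands it is incomplete at exactly the point where the real work lies.
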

\begin{proof}
Since, $\alpha$ is a small function of $f$, then $T(r,\alpha)=S(r,f)$. Therefore the proof follows from Lemma(3.4) of (\cite{cspk}).
\end{proof}
\begin{lem}\label{lem3}
 Let $f$ be a transcendental meromorphic function and  $\alpha~(\not \equiv 0, \infty)$ be a small function of $f$. Let, $\psi = \alpha(f)^{q_0}(f')^{q_1} \cdots (f^{(k)})^{q_k}$, where $q_0, q_1, \cdots, q_k(\geq 1), k(\geq 1)$ are non-negative integers. Then $$ T(r,\psi) \leq \left\{q_0+2q_1+\cdots+(k+1)q_k\right\}T(r,f)+S(r,f).$$
\end{lem}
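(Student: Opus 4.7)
The plan is to estimate the characteristic function $T(r,\psi) = m(r,\psi) + N(r,\psi)$ separately, using the logarithmic derivative lemma for the proximity part and a direct pole–order computation for the counting part.

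First I would write
\[
\psi \;=\; \alpha\, f^{Q}\,\prod_{i=1}^{k}\Bigl(\frac{f^{(i)}}{f}\Bigr)^{q_i}, \qquad Q \;=\; q_0+q_1+\cdots+q_k,
\]
so that on the side of the proximity function one gets, using $\log^{+}|xy|\le \log^{+}|x|+\log^{+}|y|$,
\[
m(r,\psi) \;\le\; m(r,\alpha) + Q\,m(r,f) + \sum_{i=1}^{k} q_i\, m\!\Bigl(r,\tfrac{f^{(i)}}{f}\Bigr).
\]
Since $\alpha$ is a small function of $f$ and the lemma of the logarithmic derivative gives $m(r,f^{(i)}/f)=S(r,f)$ for each $i$, this collapses to $m(r,\psi)\le Q\, m(r,f)+S(r,f)$.

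Next I would estimate $N(r,\psi)$ by examining what $\psi$ does at a pole of $f$. If $z_0$ is a pole of $f$ of multiplicity $p$, then $f^{(i)}$ has a pole at $z_0$ of multiplicity $p+i$, so (ignoring the contribution of poles of $\alpha$, which is $S(r,f)$) $\psi$ has a pole at $z_0$ of multiplicity at most
\[
\sum_{i=0}^{k} q_i (p+i) \;=\; p\,Q + (q_1+2q_2+\cdots+kq_k).
\]
Summing and counting, this yields
\[
N(r,\psi) \;\le\; Q\, N(r,f) + (q_1+2q_2+\cdots+kq_k)\,\overline{N}(r,f) + S(r,f).
\]

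Adding the two estimates and using $m(r,f)+N(r,f)=T(r,f)$ and $\overline{N}(r,f)\le T(r,f)$ gives
\[
T(r,\psi)\;\le\;\bigl\{Q + (q_1+2q_2+\cdots+kq_k)\bigr\}T(r,f)+S(r,f)\;=\;\bigl\{q_0+2q_1+\cdots+(k+1)q_k\bigr\}T(r,f)+S(r,f),
\]
which is the claimed inequality. No step looks like a serious obstacle; the only subtle point is making sure the contribution from $\alpha$ (both its poles and its zeros) is absorbed into $S(r,f)$ using $T(r,\alpha)=S(r,f)$, and that the exceptional set arising from the logarithmic derivative lemma is of finite measure, which is built into the convention for $S(r,f)$ stated in the introduction.
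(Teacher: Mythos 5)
Your proof is correct. Note that the paper states Lemma \ref{lem3} without giving any proof at all, so there is no argument to compare against; your route --- splitting $T(r,\psi)=m(r,\psi)+N(r,\psi)$, writing $\psi=\alpha f^{Q}\prod_{i}(f^{(i)}/f)^{q_i}$ so the lemma of the logarithmic derivative handles the proximity part, and counting pole multiplicities $p+i$ of $f^{(i)}$ for the counting part --- is the standard one and arrives at exactly the coefficient $\sum_{i=0}^{k}(i+1)q_i$. The only point worth stating explicitly is the one you use implicitly in the bound for $N(r,\psi)$: since all exponents $q_i$ are non-negative, zeros of $f$ cannot create poles of $\psi$, so the poles of $\psi$ lie only among the poles of $f$ and the poles of $\alpha$, the latter contributing $N(r,\infty;\alpha)\le T(r,\alpha)=S(r,f)$.
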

\begin{lem}[\cite{balone, chuchi}]\label{lem4}
Let, $f$ be a meromorphic function and $P[f]$ be a differential polynomial. Then $$ m\left(r,\frac{P[f]}{f^{\overline{d}(P)}}\right) \leq (\overline{d}(P)-\underline{d}(P))m\left(r,\frac{1}{f}\right)+S(r,f).$$
\end{lem}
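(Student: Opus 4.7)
The plan is to expand $P[f]/f^{\overline{d}(P)}$ monomial by monomial, bound $\log^+$ of each resulting term using the decomposition into a pure negative power of $f$ times a product of normalized logarithmic derivatives $f^{(i)}/f$, and then integrate with the help of the logarithmic derivative lemma. The technical subtlety, and the only place one can go wrong, is to avoid picking up a factor of $t$ on the coefficient of $m(r,1/f)$; this forces one to bound the sum of $t$ terms by $t$ times the maximum in modulus and pass $\log^+$ through the maximum, rather than through the sum.

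For each monomial $M_j[f]=(f)^{n_{0j}}(f')^{n_{1j}}\cdots(f^{(k)})^{n_{kj}}$ of degree $d(M_j)=\sum_i n_{ij}$, I would write
\[
\frac{M_j[f]}{f^{\overline{d}(P)}} = \frac{1}{f^{\,\overline{d}(P)-d(M_j)}}\,\prod_{i=0}^{k}\left(\frac{f^{(i)}}{f}\right)^{\!n_{ij}},
\]
and apply the basic properties of $\log^+$ to obtain
\[
\log^+\!\left|b_j\frac{M_j[f]}{f^{\overline{d}(P)}}\right| \leq \log^+|b_j| + \bigl(\overline{d}(P)-d(M_j)\bigr)\log^+\!\left|\frac{1}{f}\right| + \sum_{i=0}^{k} n_{ij}\,\log^+\!\left|\frac{f^{(i)}}{f}\right|.
\]
Since $d(M_j)\geq \underline{d}(P)$ for every $j$, the coefficient in front of $\log^+|1/f|$ is uniformly bounded by $\overline{d}(P)-\underline{d}(P)$, independently of $j$.

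Next, from $\bigl|\sum_{j=1}^{t} b_j M_j[f]/f^{\overline{d}(P)}\bigr| \leq t\max_j \bigl|b_j M_j[f]/f^{\overline{d}(P)}\bigr|$ and $\log^+(tx)\leq \log t + \log^+ x$, I get
\[
\log^+\!\left|\frac{P[f]}{f^{\overline{d}(P)}}\right| \leq \log t + \max_{j}\log^+\!\left|b_j\frac{M_j[f]}{f^{\overline{d}(P)}}\right|.
\]
Substituting the previous bound and using the uniform coefficient $\overline{d}(P)-\underline{d}(P)$ on $\log^+|1/f|$, integrating over $|z|=r$ yields
\[
m\!\left(r,\frac{P[f]}{f^{\overline{d}(P)}}\right) \leq \bigl(\overline{d}(P)-\underline{d}(P)\bigr)\,m\!\left(r,\tfrac{1}{f}\right) + \sum_{j=1}^{t} m(r,b_j) + \sum_{i,j} n_{ij}\,m\!\left(r,\tfrac{f^{(i)}}{f}\right) + O(1).
\]
The hypothesis $T(r,b_j)=S(r,f)$ handles the small-coefficient sum, and the classical logarithmic derivative lemma $m(r,f^{(i)}/f)=S(r,f)$ absorbs the remaining terms, giving the stated inequality.

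The main obstacle is precisely the max-versus-sum step in the middle. A crude use of $\log^+\!\sum\leq \sum\log^+ + \log t$ instead, followed by the preceding pointwise bound on each summand, would inflate the coefficient of $\log^+|1/f|$ to $t\bigl(\overline{d}(P)-\underline{d}(P)\bigr)$, and that factor of $t$ cannot be absorbed into $S(r,f)$ because $m(r,1/f)$ is in general comparable with $T(r,f)$. Passing through the maximum first is what preserves the sharp constant $\overline{d}(P)-\underline{d}(P)$ claimed in the lemma.
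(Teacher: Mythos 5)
Your proof is correct, but there is nothing in the paper to compare it against: the paper does not prove this lemma at all, it quotes it as a known result (Chuang's inequality) from the two cited references. So your argument serves as a self-contained proof of a statement the paper treats as external input. For the record, what you wrote is essentially the standard argument (and in particular close to the cited \enquote{simple proof} of Chakraborty): the decomposition $M_j[f]/f^{\overline{d}(P)}=f^{\,d(M_j)-\overline{d}(P)}\prod_{i}\bigl(f^{(i)}/f\bigr)^{n_{ij}}$, the uniform bound $\overline{d}(P)-d(M_j)\le \overline{d}(P)-\underline{d}(P)$, and the lemma on the logarithmic derivative $m\bigl(r,f^{(i)}/f\bigr)=S(r,f)$ (which for $i\ge 2$ needs the standard chain $m\bigl(r,f^{(i)}/f\bigr)\le\sum_{l=1}^{i}m\bigl(r,f^{(l)}/f^{(l-1)}\bigr)$ together with $T(r,f^{(l)})=O(T(r,f))+S(r,f)$, so that the error terms are $S(r,f)$ and not merely $S(r,f^{(l-1)})$). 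You are also right about the one genuinely delicate point: estimating $\log^+\bigl|\sum_j b_jM_j[f]/f^{\overline{d}(P)}\bigr|$ by first passing to $t\max_j\bigl|b_jM_j[f]/f^{\overline{d}(P)}\bigr|$, and only then applying the per-monomial bound, is what keeps the coefficient of $m(r,1/f)$ equal to $\overline{d}(P)-\underline{d}(P)$; summing the per-monomial bounds term by term would inflate it to as much as $t\bigl(\overline{d}(P)-\underline{d}(P)\bigr)$, which cannot be absorbed into $S(r,f)$ since $m(r,1/f)$ can be comparable to $T(r,f)$.
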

\section{Proof of the Theorem}
\begin{proof}[\textbf{Proof of Theorem \ref{th1.1}}]
Since $P[f]$ is a differential polynomial generated by $f$, then using Lemma \ref{lem4}
\begin{eqnarray}
\nonumber T(r,(f)^{\overline{d}(P)}) &=& N(r,0;(f)^{\overline{d}(P)}) + m\left(r,\frac{1}{(f)^{\overline{d}(P)}}\right)+O(1) \\
\nonumber &=& N(r,0;(f)^{\overline{d}(P)}) + m\left(r,\frac{P[f]}{(f)^{\overline{d}(P)}}\frac{1}{P[f]}\right)+O(1)\\
\nonumber &\leq & N(r,0;(f)^{\overline{d}(P)}) + (\overline{d}(P)-\underline{d}(P))m\left(r,\frac{1}{f}\right)+ m\left(r,\frac{1}{P[f]}\right) +S(r,f)\\
\nonumber &= & N(r,0;(f)^{\overline{d}(P)}) + (\overline{d}(P)-\underline{d}(P))m\left(r,\frac{1}{f}\right)+ ~T(r,P[f]) \\
\nonumber &~~ &  - N(r,0;P[f]) +S(r,f).
\end{eqnarray}
\begin{eqnarray}
\therefore \label{eq7} T(r,(f)^{\overline{d}(P)})&\leq&  N(r,0;(f)^{\overline{d}(P)}) + (\overline{d}(P)-\underline{d}(P))m\left(r,\frac{1}{f}\right)+ ~T(r,P[f]) \\
\nonumber &~~ &  - N(r,0;P[f]) +S(r,f).
\end{eqnarray}
Using Nevanlinna's second fundamental theorem, from (\ref{eq7}) we get 
\begin{eqnarray}\label{eq8}
T(r,(f)^{\overline{d}(P)}) &\leq &  N(r,0;(f)^{\overline{d}(P)}) + \overline{N}(r,0;P[f]) + \overline{N}(r,\infty;P[f])\\ 
\nonumber & &+ \overline{N}(r,a;P[f])- N(r,0;P[f])+ (\overline{d}(P)-\underline{d}(P))m\left(r,\frac{1}{f}\right) \\
\nonumber & &+S(r,P[f]) +S(r,f).
\end{eqnarray}
From definition of $P[f]$ and using ~Lemma (\ref{lem3}) we have $ T(r,P[f]) \leq K_1T(r,f) +S(r,f),$ for some constant $K_1$. This implies $ S(r,P[f])=S(r,f).$ We also note that $ \overline{N}(r,\infty;P[f]) = \overline{N}(r,\infty;f)+S(r,f).$ \\
Thus from (\ref{eq8}), 
\begin{eqnarray}\label{eq9}
T(r,(f)^{\overline{d}(P)}) &\leq &  N(r,0;(f)^{\overline{d}(P)}) + \overline{N}(r,0;P[f]) + \overline{N}(r,\infty;f)\\ 
\nonumber &~&+ \overline{N}(r,a;P[f])- N(r,0;P[f]) + (\overline{d}(P)-\underline{d}(P))m\left(r,\frac{1}{f}\right)\\
\nonumber & & +S(r,f).
\end{eqnarray}\\
\par
Let, $z_0$ be a zero of $f(z)$ with multiplicity $q~(\geq 1)$. For any $k$, $M_{j}[f]~~(j=1,2, \cdots, t)$ has a zero at $z_0$ of order atleast
\begin{eqnarray*}
&~&qn_{0j} + (q-1)n_{1j} + (q-2)n_{2j}+ \cdots + 2n_{q-2~j} +n_{q-1~j}+r_j\\
&= & q(n_{0j} + n_{1j} + \cdots + n_{q-1~j}) -(1\cdot n_{1j} + 2\cdot n_{2j}+ \cdots +(q-1) \cdot n_{q-1~j}) +r_j\\
&= & q(n_{0j} + n_{1j} + \cdots + n_{q-1~j}+\cdots +n_{kj})-q(n_{qj}+n_{q+1~j}+\cdots+n_{kj}) \\
&~~& -(1\cdot n_{1j} + 2\cdot n_{2j}+ \cdots +(q-1) \cdot n_{q-1~j}) +r_j\\
&=&q(d(M_j))-(1\cdot n_{1j} + 2\cdot n_{2j}+ \cdots +(q-1) \cdot n_{q-1~j}+ qn_{qj}+ \cdots+ qn_{kj})+r_j \\
&\geq & q(\underline{d}(P)) -(1\cdot n_{1j} + 2\cdot n_{2j}+ \cdots +(q-1) \cdot n_{q-1~j}+ qn_{qj}+ \cdots+ qn_{kj}), ~~~~\text{if}~~~~q \leq k
\end{eqnarray*}
and 
\begin{eqnarray*}
&~&qn_{0j} + (q-1)n_{1j} + (q-2)n_{2j}+ \cdots + (q-k)n_{kj}+r_j\\
&= & q(n_{0j} + n_{1j} + \cdots + n_{kj}) -(1\cdot n_{1j} + 2\cdot n_{2j}+ \cdots +k \cdot n_{kj})+r_j\\
&= & q(d(M_j)) - (1\cdot n_{1j} + 2\cdot n_{2j}+ \cdots +k \cdot n_{kj})+r_j\\
&\geq & q(\underline{d}(P)) - (1\cdot n_{1j} + 2\cdot n_{2j}+ \cdots +k \cdot n_{kj}), ~~~~\text{if}~~~~q > k,
\end{eqnarray*}
where $r_j~(j=1,2, \cdots ,t)$ is the multiplicity of zero of $c_j~(j=1,2, \cdots ,t)$ at $z_0$, which must be a non-negative quantity. 
\par
Therefore, $P[f]$ has a zero at $z_0$ of order
\begin{eqnarray*}
&\geq& q(\underline{d}(P))+r -(1\cdot n_{1e}+\cdots +(q-1) \cdot n_{q-1~e}+ qn_{qe}+ \cdots+q \cdot n_{ke}) ~~~\text{if}~~~q \leq k ~~\text{and} \\
&\geq& q(\underline{d}(P))+r -(1\cdot n_{1e} + 2\cdot n_{2e}+ \cdots +k \cdot n_{ke})~~~\text{if}~~~q > k,
\end{eqnarray*} 
where $r=0$ if $\alpha (z)$ does not have a zero or pole at $z_0$, $r=s$ if $\alpha (z)$ has a zero of order $s$ at $z_0$, $r=-s$ if $\alpha (z)$ has a pole of order $s$ at $z_0$, $s$ being a natural number and $\{ n_{1e},n_{2e}, \cdots,n_{ke}\}$ is the set of values such that $ (1\cdot n_{1e} + 2\cdot n_{2e}+ \cdots +k \cdot n_{ke})=\max \limits_{1\leq j \leq t}(1\cdot n_{1j} + 2\cdot n_{2j}+ \cdots +k \cdot n_{kj}) $. [We see that whenever $q(\geq 1)$ and $k(\geq 1)$ are fixed, then $ \max \limits_{1\leq j \leq t}(1\cdot n_{1j} + 2\cdot n_{2j}+ \cdots +q \cdot n_{kj})$ and $\max \limits_{1\leq j \leq t}(1\cdot n_{1j} + 2\cdot n_{2j}+ \cdots +k \cdot n_{kj})$ will appear for same set of values $\{ n_{1j},n_{2j}, \cdots,n_{kj}\}$ for some $j \in \{1,2, \cdots,t\}$.]\\

Therefore, 
\begin{eqnarray}\label{eq10}
&~& 1 + q(\overline{d}(P)) -q(\underline{d}(P))-r \\
\nonumber &~~~~~& + (1\cdot n_{1e}+\cdots +(q-1) \cdot n_{q-1~e}+ qn_{qe}+ \cdots+q \cdot n_{ke})\\
\nonumber &= &  q(\overline{d}(P)-\underline{d}(P))+1 -r \\
\nonumber &~~~& + (1\cdot n_{1e}+\cdots +(q-1) \cdot n_{q-1~e}+ qn_{qe}+ \cdots+q \cdot n_{ke}) ~~~~\text{if}~~~~q \leq k   
\end{eqnarray} 
and 
\begin{eqnarray}\label{eq11}
&~& 1 + q(\overline{d}(P)) -q(\underline{d}(P))-r + (1\cdot n_{1e} + 2\cdot n_{2e}+ \cdots +k \cdot n_{ke})\\
\nonumber &= & q(\overline{d}(P)-\underline{d}(P))+1 -r + (1\cdot n_{1e} + 2\cdot n_{2e}+ \cdots +k \cdot n_{ke}) ~~~~\text{if}~~~~q > k.   
\end{eqnarray} 
Therefore, from (\ref{eq10}) and (\ref{eq11}) we have
\begin{eqnarray*}
N(r,0;(f)^{\overline{d}(P)})+\overline{N}(r,0;P[f])-N(r,0;P[f]) &\leq & (\overline{d}(P)-\underline{d}(P))N(r,0;f)\\
& &+\overline{N}(r,0;f)+ n_{1e}N_{1}(r,0;f)\\ 
&~~& + n_{2e}N_{2}(r,0;f)+\cdots \\
& &+n_{ke}N_{k}(r,0;f) +S(r,f). 
\end{eqnarray*}
Therefore (\ref{eq9}) gives
\begin{eqnarray*}
\overline{d}(P) T(r,f) &\leq & \overline{N}(r,0;f)+\overline{N}(r,a;P[f])+\overline{N}(r,\infty;f)+(\overline{d}(P)-\underline{d}(P))N(r,0;f) \\
& &+ (\overline{d}(P)-\underline{d}(P))m\left(r,\frac{1}{f}\right)+n_{1e}N_{1}(r,0;f) + n_{2e}N_{2}(r,0;f)+\cdots \\
&~&+n_{ke}N_{k}(r,0;f) +S(r,f)  \\
&=& \overline{N}(r,0;f)+\overline{N}(r,a;P[f])+\overline{N}(r,\infty;f)+(\overline{d}(P)-\underline{d}(P))T(r,f) \\
& & +n_{1e}N_{1}(r,0;f) + n_{2e}N_{2}(r,0;f)+\cdots +n_{ke}N_{k}(r,0;f) +S(r,f).
\end{eqnarray*}
\begin{eqnarray*}
\therefore \underline{d}(P) T(r,f) &\leq & \overline{N}(r,0;f)+\overline{N}(r,a;P[f])+\overline{N}(r,\infty;f)+n_{1e}N_{1}(r,0;f)\\
& & + n_{2e}N_{2}(r,0;f)+\cdots +n_{ke}N_{k}(r,0;f) +S(r,f).
\end{eqnarray*}
\end{proof}
\begin{center}
{\bf Acknowledgement}
\end{center}
The author is thankful to his supervisor Dr. Bikash Chakraborty for guiding him throughout the preparation of this paper. The author is also grateful to the anonymous referee for his/her valuable suggestions which considerably improved the presentation of the paper.\par
The author is thankful to the Council of Scientific and Industrial Research, HRDG, India for granting Junior Research
Fellowship (File No.: 08/525(0003)/2019-EMR-I) during the tenure of which this work was done.

\end{document}